\newtheorem{corollary}{Corollary}[section]
\newtheorem{lemma}[corollary]{Lemma}
\newtheorem{proposition}[corollary]{Proposition}
\newtheorem{remark}[corollary]{Remark}
\newtheorem{rmks}[corollary]{Remarks}
\newtheorem{theorem}[corollary]{Theorem}
\newcommand{\mylabel}[1]{\label{#1}
            \ifx\undefined\stillediting
            \else \fbox{$#1$}\fi }
\newcommand{\BE}{\begin{equation}}
\newcommand{\EEQ}{\end{equation}}
\newcommand{\rfb}[1]{\mbox{\rm
   (\ref{#1})}\ifx\undefined\stillediting\else:\fbox{$#1$}\fi}
\newfont{\Blackboard}{msbm10 scaled 1200}
\newfont{\roma}{cmr10 scaled 1200}
\def\CC{\rm \hbox{C\kern-.56em\raise.4ex
         \hbox{$\scriptscriptstyle |$}\kern+0.5 em }}
\newcommand{\mm}    {{\hbox{\hskip 0.5pt}}}
\newcommand{\nm}    {{\hbox{\hskip -3pt}}}
\newcommand{\bluff} {{\hbox{\raise 15pt \hbox{\mm}}}}
\newcommand{\FORALL} {{\hbox{$\hskip 11mm \forall \;$}}}
\def\section{\@startsection {section}{1}{\z@}{-3.5ex plus -1ex minus
    -.2ex}{2.3ex plus .2ex}{\large\bf}}
\def\be{\begin{equation}}
\def\ee{\end{equation}}
\def\ds{\displaystyle}
\begin{document}
\renewcommand{\baselinestretch}{0.9}

\date{\today}
{

%\normalsize

\title[Non uniform decay of the energy]{Non uniform decay of the energy of some dissipative
evolution systems}

\author{Ka\"{\i}s Ammari}
\address{UR Analysis and Control of PDEs, UR13ES64, Department of Mathematics, Faculty of Sciences of Monastir,
University of Monastir, 5019 Monastir, Tunisia}
\email{kais.ammari@fsm.rnu.tn}

\author{Ahmed Bchatnia}
\address{UR  Analyse Non-Lin\'eaire et G\'eom\'etrie, UR13ES32, Department of Mathematics, Faculty of Sciences of Tunis, University of Tunis El Manar, Tunisia}
\email{ahmed.bchatnia@fst.rnu.tn} 

\author{Karim El Mufti}
\address{UR Analysis and Control of PDEs, UR13ES64, ISCAE, University of Manouba, Tunisia}
\email{karim.elmufti@iscae.rnu.tn}

\begin{abstract}
In this paper we consider second order evolution equations with
bounded damping. We give a characterization of a non uniform decay
for the damped problem using a kind of observability estimate for
the associated undamped problem.
\end{abstract}
\date{}
\subjclass[2010]{35B30, 35B40}
\keywords{bounded feedback, kind of observability estimate, non uniform decay}

\maketitle

\tableofcontents

\section{Introduction and main results}
Let $X$ be a complex Hilbert space with norm and inner product
denoted respectively by $||.||_{X}$ and $\left\langle .,.\right\rangle_{X}$.
Let $A$ be a linear unbounded self-adjoint and  strictly
positive operator in
$X$ and $V ={\mathcal{D}}(A^{\frac{1}{2}})$ be the domain of
$A^{\frac{1}{2}}$, with $$\|x\|_{V} = \|A^{\frac{1}{2}}x\|_{X}, \forall x \in V.$$ \\
Denote by $({\mathcal{D} }(A^{\frac{1}{2}}))^{\prime}$
the dual space of ${\mathcal{D}}(A^{\frac{1}{2}})$ obtained by means of
the inner product in $X$.
\noindent Further, let $U$ be a complex Hilbert space (identified to
its dual) and $B \in {\mathcal{L}}(U,X)$. \\
\noindent
Most of the linear control problems coming from elasticity can be written as
\be
\label{nom}
\left\{
\begin{array}{ll}
w^{\prime \prime }(t) + Aw(t) + Bu(t) = 0, \\
\nm
\ds
w(0) = w_0, \, w^{\prime}(0) = w_1,
\end{array}
\right. \ee where $w : [0,T] \rightarrow X$ is the state of the
system, $u \in L^2(0,T;U)$ is the input function and denote the
differentiation with respect
to time by ``$\prime$''. \\

\noindent
We define the energy of $w(t)$ at instant $t$
by
\[
 E(w(t))=
\frac{1}{2} \Bigr\{ ||w^{\prime}(t)||_{X}^2 + ||A^{\frac{1}{2}}w(t)||_X^2 \Bigr\}.
\]
Simple formal calculations give \be \label{ESTENb} E(w(0))-E(w(t))=
\int_0^t \left\langle Bu(s),w^{\prime}(s)\right\rangle_{X} \,
ds,\FORALL t\ge 0. \ee This is why, in many problems coming in
particular from elasticity, the input $u$ is given in the feedback
form $u(t) = B^* w^{\prime}(t)$, which obviously gives a
nonincreasing energy and which corresponds to collocated actuators
and sensors.

The aim of this paper is to give sufficient and necessary conditions
on the conservative system (\ref{eq3}) making the corresponding
closed loop system \be \label{eq1} \left\{
\begin{array}{ll}
w^{\prime \prime}(t) + Aw(t) + BB^*w^\prime(t) = 0, \\
w(0) = w_0, w^\prime(0) = w_1,
\end{array}
\right.\ee non uniformly stable. The strategy to get such a decay
rate will consist to generalize a kind of observability
estimate given in \cite{phung1}.\\
% in the space ${\mathcal D}(A^{\frac{1}{2}}) \times X$.\\
Any sufficiently smooth solution of \rfb{eq1} satisfies the energy estimate
\be
E(w(0))-E(w(t))=\int_0^t ||B^*w^{\prime}(s)||_{U}^2ds,\FORALL t\ge 0.
\label{ESTEN}\ee
In particular \rfb{ESTEN} implies that
$$ E(w(t))\le E(w(0)),\ \forall t\ge 0.$$
In the natural well-posedness space $V\times X$, the existence and uniqueness of finite energy solutions of
\rfb{eq1} can be obtained by standard semi-group methods.\\

Denote by $\phi$ the solution of the associated undamped problem
\be\label{eq3} \left\{
\begin{array}{ll}
\phi^{\prime \prime }(t) + A \phi(t) = 0, \\
\phi(0) = w_0, \, \phi^{\prime}(0) = w_1.
\end{array}
\right.\ee
It is well known that \rfb{eq3} is well-posed in ${\mathcal{D}}(A) \times V$
and in $V \times X$.\\
Our main result is stated as follows:\\
Let $\mathcal{G}$ be a continuous positive increasing real function
on $[0,+\infty)$ and define the function $\mathcal{F}$ by
$\mathcal{F}(x)= x \, (\mathcal{G}(x))^2$.
\begin{theorem} \label{prop1}
\begin{enumerate}
\item
Assume that there exists $C>0$ such that for all non-identically zero initial
data $(w_0,w_1)\in V \times X $ and for all $t>0$, the solution $w$ of \rfb{eq1}
satisfies:
\begin{equation}\label{depol}
E(w(t))\leq C||(w_{0},w_{1})||_{ V\times X}^2\mathcal{G}^{-1}\left(\frac{1}{t}\right),
\end{equation}
then there exists $C>0$ such that the solution $\phi$ of
\textsc{(\ref{eq3})} satisfies:
\begin{equation}\label{obs1}
||(w_{0},w_{1})||_{V\times X}^2\leq
16\int_{0}^{\frac{1}{\mathcal{G}\left(\frac{1}{2C \Lambda}\right)}}\|B^* \phi^{\prime}(t)\|^2_{U} \, dt,
\end{equation}
where $$\Lambda=\frac{||(w_{0},w_{1})||_{ \mathcal{D}(A)\times V}^2}{||(w_{0},w_{1})||_{V\times X}^2}.$$
\item Assume that $x\mapsto x \, \mathcal{F}^{-1}(\frac{1}{x})$ is an increasing function
and there exists $C>0$ such that for all non-identically zero initial data
$(w_{0},w_{1})\in  \mathcal{D}(A)\times V $, the solution $\phi$ of
\textsc{(\ref{eq3})} satisfies:
\begin{equation}\label{obs2}
||(w_{0},w_{1})||_{V\times X}^2\leq
C\int_{0}^{\frac{1}{\mathcal{G}\left(\frac{1}{2C\Lambda}\right)}}\|B^* \phi^\prime(t)\|^2_{U} dt.
\end{equation}
Then there exists $C>0$ such that for all $t>0$, the solution $w$ of \rfb{eq1}
satisfies:
\begin{equation}\label{decay}
E(w(t))\leq C||(w_{0},w_{1})||_{ \mathcal{D}(A)\times V}^2\mathcal{F}^{-1}\left(\frac{1}{\sqrt{t}}\right).
\end{equation}
\end{enumerate}
\end{theorem}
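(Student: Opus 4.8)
The plan is to tie both implications to a single comparison between the dissipative trajectory $w$ and the conservative trajectory $\phi$ sharing the same data. Set $\psi=w-\phi$; then $\psi$ solves $\psi''+A\psi=-BB^{*}w'$ with $\psi(0)=\psi'(0)=0$, and differentiating $E_\psi(t)=\frac12(\|\psi'(t)\|_X^2+\|A^{1/2}\psi(t)\|_X^2)$ gives $E_\psi'(t)=-\langle B^{*}w'(t),B^{*}\psi'(t)\rangle_U$. Writing $B^{*}w'=B^{*}\phi'+B^{*}\psi'$ and integrating, I would record the identity
\[
E_\psi(T)=-\int_0^T\langle B^{*}\phi'(t)+B^{*}\psi'(t),\,B^{*}\psi'(t)\rangle_U\,dt\ge 0 .
\]
Because $E_\psi(T)\ge 0$, Cauchy--Schwarz turns this into the loss-free bound $\|B^{*}\psi'\|_{L^2(0,T;U)}\le\|B^{*}\phi'\|_{L^2(0,T;U)}$, hence $\|B^{*}w'\|_{L^2(0,T;U)}\le 2\,\|B^{*}\phi'\|_{L^2(0,T;U)}$. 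For the reverse control I would instead use the Duhamel formula $\psi'(t)=-\int_0^t\cos((t-s)A^{1/2})BB^{*}w'(s)\,ds$ and Young's inequality (the cosine propagator has norm $\le 1$) to obtain the time-dependent bound $\|B^{*}\psi'\|_{L^2(0,T;U)}\le\|B\|^2 T\,\|B^{*}w'\|_{L^2(0,T;U)}$, hence $\|B^{*}\phi'\|_{L^2(0,T;U)}\le(1+\|B\|^2 T)\,\|B^{*}w'\|_{L^2(0,T;U)}$. These two transfer inequalities — one uniform in $T$, one with an explicit time loss — are the engine for the two parts.

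\noindent\textbf{Part (1).} Assuming the decay \eqref{depol}, and using $\|(w_0,w_1)\|_{V\times X}^2=2E(w(0))$, I would evaluate it at the prescribed time $T_0=1/\mathcal{G}(1/(2C\Lambda))$. Since $\mathcal{G}^{-1}(1/T_0)=1/(2C\Lambda)$, the right-hand side collapses and one finds that $E(w(T_0))$ is a fixed fraction of $E(w(0))$; this is exactly the reason the argument of $\mathcal{G}$ is calibrated to $1/(2C\Lambda)$. The dissipation identity \eqref{ESTEN} then yields $\int_0^{T_0}\|B^{*}w'\|_U^2\,dt=E(w(0))-E(w(T_0))\ge\tfrac12 E(w(0))$ after this calibration, and feeding it into the loss-free transfer $\int_0^{T_0}\|B^{*}w'\|_U^2\,dt\le 4\int_0^{T_0}\|B^{*}\phi'\|_U^2\,dt$ gives $\int_0^{T_0}\|B^{*}\phi'\|_U^2\,dt\ge\tfrac18 E(w(0))=\tfrac1{16}\|(w_0,w_1)\|_{V\times X}^2$, which is \eqref{obs1}. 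I expect this direction to be the routine one: the only quantitative input is the calibration of the energy fraction through $\Lambda$, and the constant $16$ simply absorbs the transfer factor $4$ together with that fraction.

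\noindent\textbf{Part (2).} Assuming the observability \eqref{obs2}, I would combine it with the time-dependent transfer and \eqref{ESTEN} to get the one-step inequality
\[
\|(w_0,w_1)\|_{V\times X}^2\le C\int_0^{T_0}\|B^{*}\phi'\|_U^2\,dt\le C\,(1+\|B\|^2T_0)^2\big(E(w(0))-E(w(T_0))\big),
\]
that is, $E(w(T_0))\le(1-\eta)E(w(0))$ with $\eta$ of the order of $(1+\|B\|^2T_0)^{-2}$. To turn this into a decay rate I would iterate. The key structural remark is that $v=w'$ again solves \eqref{eq1}, so the higher energy is nonincreasing and $\|(w(t),w'(t))\|_{\mathcal{D}(A)\times V}$ remains controlled by $\|(w_0,w_1)\|_{\mathcal{D}(A)\times V}$; consequently the frequency ratio along the flow satisfies $\Lambda(t)\le C\,\|(w_0,w_1)\|_{\mathcal{D}(A)\times V}^2/E(w(t))$, so the admissible step length $T_0=1/\mathcal{G}(1/(2C\Lambda))$ lengthens in a controlled way as the energy decreases. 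Restarting the one-step inequality at the resulting times produces a recursion $E_{n+1}\le(1-\eta(T_0^{(n)}))E_n$ with data-dependent, slowly varying steps.

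\noindent\textbf{Main obstacle.} The hard part will be converting this recursion into the explicit closed form \eqref{decay}. Passing to a continuous nonlinear differential (or integral) inequality for $E$ and summing the variable step lengths, I would apply a Komornik--Haraux type lemma and invert. This is exactly where the relation $\mathcal{F}(x)=x\,\mathcal{G}(x)^2$ and the hypothesis that $x\mapsto x\,\mathcal{F}^{-1}(1/x)$ be increasing enter: the squared transfer loss $(1+\|B\|^2T_0)^2$ accounts for the square in $\mathcal{F}$, while the monotonicity assumption is what guarantees that the implicit relation between elapsed time and energy level can be inverted to the clean rate $\mathcal{F}^{-1}(1/\sqrt t)$, with the $\sqrt t$ scaling emerging from summing the quadratically small per-step energy drops against the step lengths $T_0(\Lambda)$. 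Carrying out this summation uniformly over the data-dependent times, rather than for a fixed observation time, is the genuine difficulty that separates this non-uniform result from the classical exponential-decay characterization.
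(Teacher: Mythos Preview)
Your outline is essentially the paper's proof. For Part~(1) the paper also compares $w$ with $\phi$ (they take $v=\phi-w$ solving $v''+Av+BB^*v'=BB^*\phi'$, you take $\psi=w-\phi$ solving $\psi''+A\psi=-BB^*w'$; both routes yield $\int_0^T\|B^*w'\|_U^2\le 4\int_0^T\|B^*\phi'\|_U^2$), then evaluates the dissipation identity at $T_0=1/\mathcal G(1/(2C\Lambda))$ to produce the constant $16$. For Part~(2) the paper gets the $T^2$-loss reverse transfer $\int_0^T\|B^*v'\|_U^2\le CT^2\int_0^T\|B^*w'\|_U^2$ by the energy method on $v$ (your Duhamel estimate gives the same bound), then translates in time using that $w'$ again solves the damped equation so the higher energy is nonincreasing --- exactly your ``structural remark''.

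The one place where the paper is sharper than your sketch is the final inversion you flag as the main obstacle. A standard Komornik--Haraux lemma will not quite do, because the observation window $1/\mathcal G(\mathcal H(s))$ depends on the unknown $\mathcal H(s)=E(w(s))/(E(w(0))+E(w'(0)))$ itself. The paper isolates this as a separate lemma (a generalisation of Phung's Lemma~B): from
\[
\mathcal H(s)\le \frac{c}{\mathcal G(\mathcal H(s))^2}\Bigl(\mathcal H(s)-\mathcal H\bigl(s+\tfrac{1}{\mathcal G(\mathcal H(s))}\bigr)\Bigr)
\]
and the monotonicity of $x\mapsto x\,\mathcal F^{-1}(1/x)$ it deduces $\mathcal H(t)\le C\,\mathcal F^{-1}(1/\sqrt t)$ via a dichotomy on whether $\mathcal G(\mathcal H(s))<1/t$, followed by showing that $\Psi_t(nt)\mathcal H(nt)$ is essentially nonincreasing in $n$ for the auxiliary comparison function $\Psi_t(s)=\bigl(\mathcal F^{-1}(t/s)+\mathcal G^{-1}(1/t)\bigr)^{-1}$. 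So your diagnosis of where the hypotheses on $\mathcal F$ enter is correct; what remains is to write down and prove this variable-step lemma rather than cite one.
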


\begin{corollary}
The weak observability \footnote{see \cite{ammari,kaisserge,nodea} for more details} i.e. there exist $T, C>0$
such that for all
$(w_0,w_1) \in V \times X $  the solution $\phi$ of (\ref{eq3}) satisfies
\be
\int_{0}^{T} ||B^* \phi^{\prime}(t)||^2_{U} \, dt \geq C \,
||(w_0,w_1)||^2_{V \times X} \, {\mathcal G} \left(\frac{||(w_0,w_1)||^2_{X \times ({\mathcal{D} }(A^{\frac{1}{2}}))^{\prime}}}
{||(w_0,w_1)||^2_{V \times X}} \right),
\label{nunif1}
\ee
implies in particular (\ref{obs1}).
\end{corollary}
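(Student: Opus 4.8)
The plan is to obtain \eqref{obs1} from the weak observability \eqref{nunif1} in two moves: an interpolation inequality that replaces the weak-norm quotient inside $\mathcal G$ by the strong-norm quotient $\Lambda$, followed by a summation over translated time windows that upgrades the fixed observation time of \eqref{nunif1} to the frequency-adapted time $1/\mathcal G(1/(2C\Lambda))$ demanded by \eqref{obs1}. Throughout, write $E_0=\|(w_0,w_1)\|_{V\times X}^2$, $E_{-1}=\|(w_0,w_1)\|_{X\times(\mathcal D(A^{1/2}))'}^2$ and $E_1=\|(w_0,w_1)\|_{\mathcal D(A)\times V}^2$, so that $\Lambda=E_1/E_0$ and \eqref{nunif1} reads $\int_0^T\|B^*\phi'(t)\|_U^2\,dt\ge C\,E_0\,\mathcal G(E_{-1}/E_0)$.

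First I would prove the interpolation bound $E_0^2\le E_1\,E_{-1}$. Since $A$ is self-adjoint and positive, $\|A^{1/2}w_0\|_X^2=\langle Aw_0,w_0\rangle_X\le\|Aw_0\|_X\,\|w_0\|_X$ and $\|w_1\|_X^2=\langle A^{1/2}w_1,A^{-1/2}w_1\rangle_X\le\|A^{1/2}w_1\|_X\,\|A^{-1/2}w_1\|_X$; adding these and applying the Cauchy--Schwarz inequality to the resulting two-term sum gives $E_0\le\sqrt{E_1}\,\sqrt{E_{-1}}$, that is $E_{-1}/E_0\ge E_0/E_1=1/\Lambda$. As $\mathcal G$ is increasing, this yields $\mathcal G(E_{-1}/E_0)\ge\mathcal G(1/\Lambda)$, so that \eqref{nunif1} already implies $\int_0^T\|B^*\phi'\|_U^2\,dt\ge C\,E_0\,\mathcal G(1/\Lambda)$. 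This interpolation step is the conceptual heart of the corollary: it is what identifies $1/\Lambda$ as the correct argument of $\mathcal G$, matching the $\Lambda$-dependence in \eqref{obs1}.

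Next I would exploit that the three quantities $E_0,E_{-1},E_1$ are conserved along the undamped flow \eqref{eq3}, so that for each integer $k$ the estimate \eqref{nunif1}, applied on the window $[kT,(k+1)T]$ to the translated datum $(\phi(kT),\phi'(kT))$, holds with the \emph{same} constant $C$ and the same right-hand side. Summing over $k=0,\dots,N-1$ with $N=\lfloor T^{*}/T\rfloor$ and $T^{*}=1/\mathcal G(1/(2C\Lambda))$ then gives $\int_0^{T^{*}}\|B^*\phi'\|_U^2\,dt\ge N\,C\,E_0\,\mathcal G(1/\Lambda)$, where $N\ge\tfrac12\,T^{*}/T=\tfrac{1}{2T\,\mathcal G(1/(2C\Lambda))}$; the product $N\,\mathcal G(1/\Lambda)$ is thereby bounded below by a fixed positive constant, which is exactly the form required by \eqref{obs1}.

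The main obstacle is uniform constant bookkeeping rather than any single estimate. The windowing is legitimate only when $T^{*}\ge T$, i.e.\ for data with $\Lambda$ large enough; the complementary low-frequency range, where $\Lambda$ and hence $1/\mathcal G(1/\Lambda)$ stay bounded, must be handled directly by the plain estimate of the second paragraph, and the two regimes have to be reconciled so that the \emph{same} numerical factor ($16$) and the \emph{same} shift ($2C$ inside $\mathcal G$) serve every nonzero datum. The shift $2C$ is precisely what lets one compare $\mathcal G(1/\Lambda)$ with $\mathcal G(1/(2C\Lambda))$ and absorb the floor-function loss in $N$ into the universal constant. Controlling this ratio and closing the bookkeeping uniformly in the data is the delicate point; the interpolation inequality is what guarantees that all of these manipulations take place at the right frequency scale $1/\Lambda$.
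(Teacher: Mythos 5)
The paper states this corollary without any proof, so the comparison is against an intended-but-omitted argument; your reconstruction is the natural one, and its two pillars are sound. The interpolation inequality $\|(w_0,w_1)\|_{V\times X}^{4}\leq \|(w_0,w_1)\|_{\mathcal{D}(A)\times V}^{2}\,\|(w_0,w_1)\|_{X\times(\mathcal{D}(A^{1/2}))'}^{2}$ is correctly proved by your two Cauchy--Schwarz steps (and the dual norm induced by the pivot $X$ is indeed $\|A^{-1/2}\cdot\|_{X}$ on $X$), and it is exactly what converts the weak-norm quotient inside $\mathcal{G}$ in (\ref{nunif1}) into the quotient $1/\Lambda$ appearing in (\ref{obs1}). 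Likewise, all three quantities $E_{-1},E_0,E_1$ are conserved by the group generated by (\ref{eq3}) (apply energy conservation to $A^{\pm 1/2}\phi$), so applying (\ref{nunif1}) on the translated windows $[kT,(k+1)T]$ and summing is legitimate. When $T^{*}:=1/\mathcal{G}(1/(2C\Lambda))\geq T$, your bounds $\lfloor T^{*}/T\rfloor\geq T^{*}/(2T)$ and $T^{*}\,\mathcal{G}(1/\Lambda)\geq T^{*}\,\mathcal{G}(1/(2C\Lambda))=1$ (valid once one arranges $2C\geq 1$) give $\|(w_0,w_1)\|_{V\times X}^{2}\leq \frac{2T}{C}\int_{0}^{T^{*}}\|B^{*}\phi'(t)\|_{U}^{2}\,dt$, i.e.\ (\ref{obs1}) with $2T/C$ in place of $16$ --- an innocuous change, since the corollary's only role is to furnish the hypothesis (\ref{obs2}) of Theorem \ref{prop1}(2), which tolerates an arbitrary constant.

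The one step that fails as written is your disposal of the regime $T^{*}<T$. You say it is \emph{handled directly by the plain estimate of the second paragraph}, but that estimate is a lower bound on $\int_{0}^{T}\|B^{*}\phi'\|_{U}^{2}\,dt$, and when $T^{*}<T$ it gives no information whatsoever about $\int_{0}^{T^{*}}$: nothing in (\ref{nunif1}) prevents $B^{*}\phi'$ from vanishing on $[0,T^{*}]$ while carrying all its mass on $[T^{*},T]$, so (\ref{obs1}) with that short observation time is simply not deducible for such data. The repair is not a direct estimate but a constant adjustment: since $A$ is strictly positive, $\Lambda\geq\lambda_0:=\inf\mathrm{spec}(A)>0$ for every nonzero datum in $\mathcal{D}(A)\times V$, so (assuming, as in all the paper's examples, that $\mathcal{G}(x)\to 0$ as $x\to 0^{+}$) one may enlarge the constant $\kappa$ inside $\mathcal{G}$ until $1/\mathcal{G}(1/(\kappa\lambda_0))\geq T$; then $T^{*}\geq T$ holds for \emph{all} data and only the windowed regime remains. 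This is presumably what your remark about the shift $2C$ was groping toward, but it requires the uniform lower bound on $\Lambda$ --- a fact you never invoke --- and it means the literal constants $16$ and $2C$ of (\ref{obs1}) cannot be retained; harmless for the intended application, but it needs to be said to close the argument.
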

The paper is organized as follows: In section \ref{sec2} we prove
our main result and in the last section we give some applications
both in the linear and the nonlinear case. Decay rates for nonlinear
dissipations were obtained under our generalized observability
estimate. Here we mention that the literature is less provide. We
cite essentially \cite{am-al,phung2}.
\section{Proof of Theorem \ref{prop1}}\label{sec2}
The following lemma will be very useful.
\begin{lemma}\label{lemme2}
Let $\mathcal{H}$ (resp. $\mathcal{G}$) be a continuous positive decreasing (resp. increasing) real function on $[0,+\infty)$. Suppose that $\mathcal{H}$ is bounded by one and there exists a positive constant $c$ such that
\begin{equation}\label{h}
\mathcal{H}(s)\leq \frac{c}{\big(\mathcal{G(H}(s))\big)^2}\left(\mathcal{H}(s)-\mathcal{H}\left(\frac{1}{\mathcal{G(H}(s))}+s\right)\right),\ \forall s>0.
\end{equation}
Suppose that $x\mapsto x \, \mathcal{F}^{-1}(\frac{1}{x})$ is an increasing function, then there exists $C > 0$ such that for any $t > 0$,
\begin{equation}
\mathcal{H}(t)\leq C\mathcal{F}^{-1}\left(\frac{1}{\sqrt{t}}\right).
\end{equation}
\end{lemma}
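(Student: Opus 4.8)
The plan is to read the hypothesis (\ref{h}) as a one-step dissipation estimate and iterate it. Writing $\tau(s)=1/\mathcal{G}(\mathcal{H}(s))$ and recalling that $\mathcal{F}(x)=x\,(\mathcal{G}(x))^2$, inequality (\ref{h}) is equivalent to
\[
\mathcal{H}\big(s+\tau(s)\big)\le \mathcal{H}(s)-\frac1c\,\mathcal{F}(\mathcal{H}(s))=\mathcal{H}(s)\Big(1-\frac1c(\mathcal{G}(\mathcal{H}(s)))^2\Big).
\]
Because $\mathcal{H}(s)-\mathcal{H}(s+\tau(s))\le\mathcal{H}(s)$, the hypothesis automatically forces $(\mathcal{G}(\mathcal{H}(s)))^2\le c$, so the bracket is nonnegative and the estimate is consistent.

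First I would fix $t_0=0$ and define the increasing sequence $t_{k+1}=t_k+\tau(t_k)$, together with $h_k=\mathcal{H}(t_k)$. The displayed inequality becomes the recurrence $h_{k+1}\le h_k-\frac1c\mathcal{F}(h_k)$, so $(h_k)$ is nonincreasing; and if it had a positive limit $\ell$, then $\mathcal{G}(\ell)>0$ would force a strict geometric decrease, a contradiction, so $h_k\downarrow0$.

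The core of the argument is to turn this into a rate in the real time variable. Since $\mathcal{F}$, $\mathcal{G}$ are increasing and $h_k\ge h_n$ for $k\le n$, summing the recurrence gives $h_0-h_n\ge\frac{n}{c}\mathcal{F}(h_n)$, hence $n\le c/\mathcal{F}(h_n)$ (using $h_0\le1$); bounding each step by $\tau(t_k)\le1/\mathcal{G}(h_n)$ gives $t_n\le n/\mathcal{G}(h_n)\le c/(\mathcal{F}(h_n)\mathcal{G}(h_n))$. As $h_n\mathcal{G}(h_n)\le \mathcal{G}(1)=:M$ on $(0,1]$, one has $\mathcal{F}(h_n)\le M\mathcal{G}(h_n)$, whence $t_n\le cM/(\mathcal{F}(h_n))^2$. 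Equivalently $\mathcal{F}(h_n)\le\sqrt{cM}\,t_n^{-1/2}$, that is $h_n\le\mathcal{F}^{-1}\big(\sqrt{cM}\,t_n^{-1/2}\big)$.

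For an arbitrary $t>0$ I would choose $n$ with $t_n\le t<t_{n+1}$, so $\mathcal{H}(t)\le h_n$ by monotonicity. The step I expect to be the main obstacle is replacing the discrete time $t_n$ by $t$ up to a multiplicative constant: one must control the last gap $t_{n+1}-t_n=\tau(t_n)$ against $t_n$ to get $t\le Kt_n$ with $K$ independent of $n$, and then invoke the standing hypothesis that $x\mapsto x\,\mathcal{F}^{-1}(1/x)$ is increasing. That hypothesis is precisely equivalent to $\mathcal{F}^{-1}(\lambda y)\le\lambda\,\mathcal{F}^{-1}(y)$ for every $\lambda\ge1$, which absorbs the constant produced by $t\le Kt_n$ and yields $\mathcal{H}(t)\le h_n\le C\,\mathcal{F}^{-1}(1/\sqrt t)$ for all large $t$; small $t$ are covered trivially because $\mathcal{H}\le1$ while $\mathcal{F}^{-1}(1/\sqrt t)$ is bounded below there.
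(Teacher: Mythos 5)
Your discrete scheme is sound as far as it goes, and it is a genuinely different route from the paper's: the paper first derives $\mathcal{H}(s)\leq \mathcal{F}^{-1}\big(\mathcal{H}(s)-\mathcal{H}(t+s)\big)+\mathcal{G}^{-1}(1/t)$, introduces the auxiliary function $\Psi_{t}(s)=\big(\mathcal{F}^{-1}(t/s)+\mathcal{G}^{-1}(1/t)\big)^{-1}$, propagates the bound $\Psi_{t}\cdot\mathcal{H}\leq 1$ along fixed steps of length $t$, and concludes $\mathcal{H}(t^{2})\leq \mathcal{F}^{-1}(1/t)+\mathcal{G}^{-1}(1/t)$ before absorbing $\mathcal{G}^{-1}$ into $\mathcal{F}^{-1}$ near zero; you instead iterate with the adaptive step $\tau(s)=1/\mathcal{G}(\mathcal{H}(s))$ and telescope. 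All your steps up to $h_{n}\leq\mathcal{F}^{-1}\big(\sqrt{cM}\,t_{n}^{-1/2}\big)$ check out. But the step you yourself flag as the main obstacle is a genuine gap, and the specific mechanism you propose for it would fail: there is no uniform $K$ with $t\leq K t_{n}$ for $t\in[t_{n},t_{n+1})$. The last gap is $\tau(t_{n})=1/\mathcal{G}(h_{n})$, and the hypothesis (\ref{h}) only ever gives \emph{upper} bounds on $\mathcal{H}$ (a faster decrease makes the right-hand side of (\ref{h}) larger, so it is always consistent with the hypothesis); hence $\mathcal{H}$ may collapse across a single step, making $\mathcal{G}(h_{n})$ arbitrarily small relative to $\mathcal{G}(h_{n-1})$. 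For instance with $\mathcal{G}(x)=x$, $h_{0}=1$, one has $t_{1}=1$, and if $h_{1}=\varepsilon$ then $t_{2}=1+1/\varepsilon$, so the ratio $t_{n+1}/t_{n}$ is unbounded as $\varepsilon\to 0$. So "control the last gap against $t_{n}$" is not a provable step.

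Fortunately the gap closes using only estimates you already derived, by bounding $t_{n+1}$ (rather than comparing $t$ to $t_{n}$) in terms of $\mathcal{F}(h_{n})$: since $h_{n}\leq 1$ you have $\mathcal{F}(h_{n})\leq M^{2}$ with $M=\mathcal{G}(1)$, and your inequality $\mathcal{F}(h_{n})\leq M\,\mathcal{G}(h_{n})$ gives $\tau(t_{n})=1/\mathcal{G}(h_{n})\leq M/\mathcal{F}(h_{n})\leq M^{3}/\mathcal{F}(h_{n})^{2}$. Combined with your bound $t_{n}\leq cM/\mathcal{F}(h_{n})^{2}$, this yields $t\leq t_{n+1}\leq (cM+M^{3})\,\mathcal{F}(h_{n})^{-2}$ for every $t\in[t_{n},t_{n+1}]$, hence $\mathcal{H}(t)\leq h_{n}\leq \mathcal{F}^{-1}\big(\sqrt{cM+M^{3}}\;t^{-1/2}\big)$; your (correct) observation that the monotonicity of $x\mapsto x\,\mathcal{F}^{-1}(1/x)$ is equivalent to $\mathcal{F}^{-1}(\lambda y)\leq\lambda\,\mathcal{F}^{-1}(y)$ for $\lambda\geq 1$ then absorbs the constant, and small $t$ are trivial since $\mathcal{H}\leq 1$, as you say (note also $t_{n}\geq n/\mathcal{G}(h_{0})\to\infty$, so every large $t$ is covered). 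With this repair your argument is complete and arguably more elementary than the paper's $\Psi_{t}$-iteration, and it isolates more clearly where the $1/\sqrt{t}$ (rather than $1/t$) rate comes from, namely the loss of one factor of $\mathcal{F}(h_{n})$ in converting step counts into elapsed time.
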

The proof is similar to that of Lemma B in \cite{phung1}. Since our result is more general, we give it for the reader's convenience.
\begin{proof}
Let $t>0$. We distinguish two cases:\\

\noindent
$\bullet$ If
\begin{equation}\label{hb}
\mathcal{G(H}(s))<\frac{1}{t}, \mbox{ then }  \qquad \mathcal{H}(s)\leq \mathcal{G}^{-1}\left(\frac{1}{t}\right).
\end{equation}\label{h(t+s)}
$\bullet$ If
\begin{displaymath}
\mathcal{G(H}(s))\geq\frac{1}{t}, \qquad then \qquad \frac{1}{\mathcal{G(H}(s))}+s\leq t+s,
\end{displaymath}
therefore
\begin{equation}
\mathcal{H}(t+s)\leq \mathcal{H}\left(\frac{1}{\mathcal{G(H}(s))}+s \right)
\end{equation}
and we get
\begin{equation}\label{h(t+s)b}
\mathcal{F}(\mathcal{H}(s))\leq \mathcal{H}(s)-\mathcal{H}(t+s).
\end{equation}
The inequalities (\ref{hb}) and (\ref{h(t+s)b}) give
\begin{equation}
\mathcal{H}(s)\leq \mathcal{F}^{-1}(\mathcal{H}(s)-\mathcal{H}(t+s)) +\mathcal{G}^{-1}\left(\frac{1}{t}\right), \forall s, t>0.
\end{equation}

We introduce the function $\Psi_{t}$ defined on $]0,+\infty[$ by:
\begin{equation}
\Psi_{t}(s)=\frac{1}{\mathcal{F}^{-1}\left(\frac{t}{s}\right) +\mathcal{G}^{-1}\left(\frac{1}{t}\right)}.
\end{equation}
We distinguish two cases:\\

\noindent
$\bullet$ If $ \mathcal{H}(s)-\mathcal{H}(t+s)< \frac{t}{t+s} $  then $ \mathcal{H}(s)\leq \Psi_{t}(t+s)$ and we deduce
\begin{equation}
\Psi_{t}(t+s)\mathcal{H}(t+s)\leq1.
\end{equation}
$\bullet$ If $ \mathcal{H}(s)-\mathcal{H}(t+s)> \frac{t}{t+s} .$ Taking into account that $\mathcal{H}(s)\leq 1$  we obtain
$$\frac{t}{t+s}\mathcal{H}(s)\leq\frac{t}{t+s},$$
so
\begin{equation}
\frac{t}{t+s}\mathcal{H}(s)\leq\frac{t}{t+s}<\mathcal{H}(s)-\mathcal{H}(t+s),
\end{equation}
and we deduce
\begin{equation}
\mathcal{H}(t+s)<\mathcal{H}(s)\, \frac{s}{t+s}.
\end{equation}
Consequently,
\begin{eqnarray}
\Psi_{t}(t+s)\mathcal{H}(t+s)\nonumber
&<&\Psi_{t}(t+s)\mathcal{H}(s)\, \frac{s}{t+s} \\\nonumber
&=&\frac{\Psi_{t}(t+s)}{t+s}\mathcal{H}(s)\Psi_{t}(s)\frac{s}{\Psi_{t}(s)}\\
&=&\mathcal{H}(s)\Psi_{t}(s)\, \frac{\frac{\Psi_{t}(t+s)}{t+s}}{\frac{\Psi_{t}(s)}{s}}.
\end{eqnarray}
Using the increasing property of $x\mapsto x{\mathcal{F}}^{-1}(\frac{1}{x})$, we obtain
\begin{equation}
\Psi_{t}(t+s)\mathcal{H}(t+s)<\Psi_{t}(s)\mathcal{H}(s).
\end{equation}
We have proved that for all $s,t>0$, we have either
$$\Psi_{t}(t+s)\mathcal{H}(t+s)\leq1 \; \hbox{or} \; \Psi_{t}(t+s)\mathcal{H}(t+s)<\Psi_{t}(s)\mathcal{H}(s).$$
In particular, we deduce that for any $t>0$ and $n\in \mathbb{N}^*,$ either
\begin{equation}
\Psi_{t}((n+1)t)\mathcal{H}((n+1)t)\leq1 \mbox{ or } \Psi_{t}((n+1)t)\mathcal{H}((n+1)t)<\Psi_{nt}(t)\mathcal{H}(nt).
\end{equation}
Hence, we have
\begin{equation}
\Psi_{t}((n+1)t)\mathcal{H}((n+1)t)\leq max(1,\Psi_{t}(t)\mathcal{H}(t))=1.
\end{equation}
Therefore, for all $t>0$ and $n\in\mathbb{N}^{*}$,
\begin{equation}
\mathcal{H}((n+1)t)\leq \mathcal{F}^{-1}\left(\frac{1}{n+1}\right) +\mathcal{G}^{-1}\left(\frac{1}{t}\right).
\end{equation}
Choose $n$ such that $n + 1\leq t<n + 2$ and make use again of the increasing property of $x\mapsto x{\mathcal{F}}^{-1}(\frac{1}{x})$, we get for all $t\geq 2$ :

\begin{equation}
\mathcal{H}(t^2)\leq \mathcal{F}^{-1} \left( \frac{1}{t} \right) +\mathcal{G}^{-1} \left( \frac{1}{t} \right).
\end{equation}
Since $|\mathcal{G}^{-1}(x)|\leq |\mathcal{F}^{-1}(x)|$ close to zero, the desired result follows immediately.
\end{proof}

After, we give the proof of the main result.
\begin{proof}[Proof of (1)]
We combine (\ref{depol}) and the following formula:
\begin{equation}\label{wt}
E(w(t))=E(\phi(0))-2\int_{0}^{t}\|B^* \phi^\prime(t)\|^2_{U}  ds \hspace{0.4cm } \forall t>0,
\end{equation}
to get:
\begin{equation}
E(\phi(0))-2\int_{0}^{t}\|B^* \phi^\prime(t)\|^2_{U} \, ds \leq
C\mathcal{G}^{-1}\left(\frac{1}{t}\right)||(w_{0},w_{1})||_{\mathcal{D}(A)\times V}^2.
\end{equation}
Take $t=\frac{1}{\mathcal{G}\left(\frac{1}{2C\Lambda}\right)},$ we obtain
\begin{equation}
E(\phi(0))-2\int_{0}^{\frac{1}{\mathcal{G}\left(\frac{1}{2C\Lambda}\right)}}\|B^* \phi^\prime(t)\|^2_{U} ds \leq
\frac{1}{2}||(w_{0},w_{1})||_{V\times X}^2.
\end{equation}

We deduce that
\begin{equation}\label{444}
||(w_{0},w_{1})||_{V\times X}^2\leq
4\int_{0}^{\frac{1}{\mathcal{G}(\frac{1}{2C\Lambda})}}\|(B^* \phi)^{\prime}(t)\|^2_{U}ds.
\end{equation}
Now, let us consider $v=\phi-w$, then $v$ satisfies the following system:
\begin{equation}\label{second}
\left \{
\begin{array}{ll}
v^{\prime \prime}(t) + Av(t)+ BB^* v^\prime (t) =BB^\star \phi^\prime (t), \, t > 0,\\
(v(0),v^\prime(0)) = (0,0).
             \end{array}\right.
\end{equation}
\\
Multiply the first equation of (\ref{second}) by $v^\prime$,
and integer by parts to get
\begin{equation}\label{aaa}
E(v(t))+2\int_{0}^{t}\|B^* v^\prime(s)\|^2_{U} \,
ds = 2\int_{0}^{t}\langle B^*\phi^\prime (s),B^* v^\prime(s)\rangle_U \, ds.
\end{equation}

Make use of Young inequality,
\begin{equation}
E(v(t))+2\int_{0}^{t}\|B^* v^\prime(s)\|^2_{U} \,
ds \leq\int_{0}^{t}\left(\|B^* \phi^\prime(s)\|^2_{U} + \|B^* v^\prime(t)\|^2_{U} \right) \,
ds.
\end{equation}
Hence,
\begin{equation}\label{36}
E(v(t)) + \int_{0}^{t} \|B^* v^\prime(s)\|^2_{U} \,
ds \leq \int_{0}^{t}\|B^* \phi^\prime(s)\|^2_{U}
ds.
\end{equation}
Since
$$\|B^*w^\prime\|^2_U=\|B^*\phi^\prime - B^*v^\prime\|^2_U \leq 2 \left(\|B^*\phi^\prime\|^2_U +
\|B^*v^\prime\|^2_U \right),$$ therefore
\begin{equation}\label{37}
 \int_{0}^{\frac{1}{\mathcal{G}\left(\frac{1}{2C\Lambda}\right)}}
\|B^* w^\prime(t)\|^2_{U} dt\\
\leq
2 \int_{0}^{\frac{1}{\mathcal{G}\left(\frac{1}{2C\Lambda}\right)}}\left(\|B^*\phi^\prime\|^2_U+
\|B^*v^\prime\|^2_U\right) \, dt.
\end{equation}
Thanks to (\ref{36}) and (\ref{37}), we obtain
\begin{equation}
\int_{0}^{\frac{1}{\mathcal{G}\left(\frac{1}{2C\Lambda}\right)}}\|B^* w^\prime(t)\|^2_{U} \,
dt \leq 4 \int_{0}^{\frac{1}{\mathcal{G}\left(\frac{1}{2C\Lambda}\right)}}\|B^* \phi^\prime(t)\|^2_{U} \, dt.
\end{equation}
Finally, by virtue of (\ref{444}), we conclude the following
estimate
\begin{equation}
 ||(w_{0},w_{1})||_{V\times X}^2
 \leq 16 \int_{0}^{\frac{1}{\mathcal{G}\left(\frac{1}{2C\Lambda}\right)}}\|B^* \phi^\prime(t)\|^2_{U} dt.
\end{equation}
\end{proof}

\begin{proof}[Proof of (2)]
Using similar arguments as previously, the inequality (\ref{obs2})
becomes
$$
E(w(0)) \leq  2C\int_{0}^{\frac{1}{\mathcal{G}\left(\frac{1}{2C\Lambda}\right)}}\big(\|B^*w^\prime(t)\|^2_U
+\|B^*v^\prime(t)\|^2_U\big) dt.
$$
Multiply the first equation of (\ref{second}) by $v^\prime$ and
integer by parts. It follows that
\begin{equation}
E(v(t)) \leq
\int_{0}^{t}\left(\frac{\|B^*w^\prime (s)\|^2_U}{\varepsilon}+
\varepsilon\|B^*v^\prime (s)\|^2_U\right) \, ds, \forall \, \varepsilon>0.
\end{equation}
Let $T>0$ be fixed, for all $0\leq t\leq T$, we have
\begin{eqnarray}
\sup_{0\leq t\leq T}E(v(t)) &\leq &
\int_{0}^{T}\left(\frac{\|B^*w^\prime (t)\|^2_U}{\varepsilon}+ \varepsilon \|B^*v^\prime (t)\|^2_U\right) \, dt\nonumber\\ &\leq &
\int_{0}^{T}\frac{\|B^*w^\prime (t)\|^2_U}{\varepsilon} \, dt + \varepsilon C \int_{0}^{T}E(v(t)) \, dt\nonumber\\
&\leq &
\int_{0}^{T}\frac{\|B^*w^\prime (t)\|^2_U}{\varepsilon} \, dt+
\varepsilon CT\sup_{0\leq t\leq T}E(v(t)).\nonumber
\end{eqnarray}
Choose $\varepsilon=\frac{1}{2CT}$, so we have
\begin{equation}
\sup_{0\leq t\leq T}E(v(t))\leq
4CT\int_{0}^{T}\|B^*w'\|^2_U dt.
\end{equation}
On the other hand, as
$$\| v^\prime (t)\|_X^2 \leq E(v(t)),$$
we integer on $[0,T]$, to get
\begin{eqnarray}
\int_{0}^{T}\| v^\prime (t)\|_X^2 dt
&\leq &\int_{0}^{T}E(v(t))\ dt\nonumber\\
&\leq & T\sup_{0\leq t\leq T}E(v(t))\nonumber\\
&\leq &
4CT^2\int_{0}^{T}\|B^*w'\|^2_U  dt,\nonumber
\end{eqnarray}
so
\begin{equation}\label{4Talpha}
\int_{0}^{T}\|B^*v'\|^2_U  dt\leq
4C^2T^2\int_{0}^{T}\|B^*w'\|^2_U  dt.
\end{equation}
Hence, for $T=\frac{1}{\mathcal{G}\left(\frac{1}{2C\Lambda}\right)}$ we conclude that
\begin{equation}  \label{E(w,0)}
E(w(0))\leq
2C\left(1+ 4C^2 \left(\frac{1}{\mathcal{G}\left(\frac{1}{2C\Lambda}\right)}\right)^{2}\right)\int_{0}^{\frac{1}{\mathcal{G}\left(\frac{1}{2C\Lambda}\right)}}\|B^*w^\prime\|^2_U  \, dt.
\end{equation}

One can easily verify that
$$ \Lambda \leq  \frac{E(w(0))+E(w^\prime(0))}{E(w(0))}:=\tilde{\Lambda},$$ and consequently
\begin{equation}
E(w(0))\leq
2C\left(1+4C^2\left(\frac{1}{\mathcal{G}\left(\frac{1}{2C\tilde{\Lambda}}\right)}\right)^{2}\right)
\int_{0}^{\frac{1}{\mathcal{G}\left(\frac{1}{2C\tilde{\Lambda}}\right)}}\|B^*w^\prime (t)\|^2_U \, dt.
\end{equation}
Since $\tilde{\Lambda}$ is minimized, we get
\begin{equation}
E(w(0))\leq
C \, \left(\frac{1}{\mathcal{G}\left(\frac{1}{2C\tilde{\Lambda}}\right)}\right)^{2}
\int_{0}^{\frac{1}{\mathcal{G}\left(\frac{1}{2C\tilde{\Lambda}}\right)}}\|B^*w^\prime (t)\|^2_U  \, dt.
\end{equation}
By translating the time variable and using the formula
$$ E(w(t_1))-E(w(t_2))+\int_{t_2}^{t_1}\|B^*w^\prime (t)\|^2_U  \, dt = 0,$$
we obtain:
$$
\frac{E(w(0))}{E(w(0))+ E(w^\prime(0))}
\leq
C\, \left(\frac{1}{\mathcal{G}\left(\frac{1}{2C\tilde{\Lambda}}\right)}\right)^{2}
\int_{s}^{\frac{1}{\mathcal{G}\left(\frac{1}{2C\tilde{\Lambda}}\right)}+s}\frac{\|B^*w^\prime (t)\|^2_U }{E(w(0))
+ E(w^\prime(0))} dt
$$
$$
\leq  C \, \left(\frac{1}{\mathcal{G}\left(\frac{1}{2C\tilde{\Lambda}}\right)}\right)^{2}
\left( \frac{E(w(s))}{E(w(0))+E( w^\prime(0))} -
\frac{E\left(w \left(\frac{1}{\mathcal{G}\left(\frac{1}{2C\tilde{\Lambda}}\right)}+s \right)\right)}{E(w(0))
+E(w^\prime(0))} \right).
$$
Put
$\mathcal{H}(s)=\frac{E(w(s))}{2C\left(E(w(0))+E(w^\prime(0))\right)}$.\\
Make use of the previous inequality and the decay of $\mathcal{H}$,
it follows that

$$\mathcal{H}(s)\leq  C \left(\frac{1}{\mathcal{G}(\mathcal{H}(s))}\right)^{2}
\left( \mathcal{H}(s) - \mathcal{H}\left(\frac{1}{\mathcal{G}(\mathcal{H}(s))}+s \right)\right), \forall s>0.$$

Thanks to (\ref{h}) and Lemma \ref{lemme2}, there exists $C$ such
that
$$\frac{E(w(t))}{E(w(0))+ E(\partial_t w(0))}\leq C
\mathcal{F}^{-1}\left(\frac{1}{\sqrt{t}}\right).$$ We conclude the
desired result.
\end{proof}

\section{Some applications}\label{appls}
We give some applications of Theorem \ref{prop1}.
\subsection{The linear case}

\subsubsection{Example 1}
Let $\mathcal{G}$ be given by $\mathcal{G}(x)=x^p$ on $(0,r_0]$,
$r_0 >0$ and $p\in \mathbb{R}\setminus[-\frac{1}{2},0]$. Then The
following two statements are equivalent.
\begin{itemize}
\item[i)] There exists $C>0$ such that for all  non-identically zero initial data $(w_0,w_1)\in V\times X$, the solution $\phi$ of
\textsc{(\ref{eq3})} satisfies:
\begin{equation*}
||(w_{0},w_{1})||_{V \times
X}^2\leq
16 \int_{0}^{C\Lambda^p}\|B^* \phi^\prime(t)\|^2_U \, dt.
\end{equation*}
\item[ii)] There exists $C>0$ such that for all non-identically zero initial data
$(w_{0},w_{1})\in \mathcal{D}(A) \times V$ and for all $t>0$, the
solution $w$ of \rfb{eq1} satisfies:
\begin{equation*}
E(w(t))\leq \frac{C}{t^p}||(w_{0},w_{1})||_{\mathcal{D}(A) \times V}^2.
\end{equation*}
\end{itemize}
\begin{remark}
In \cite{phung1}, the author construct a geometry with a trapped ray  for the linear dissipative wave equation (the geometric control condition is then not fulfilled) and establish a polynomial decay rate when $(w_0,w_1) \in \left[H^2(\Omega)\cap H_0^1(\Omega) \right] \times H^1_0(\Omega)$, the estimate (\ref{obs1}) is satisfied for $\mathcal{G}(x)=x^\delta,$ $\delta>0$.
\end{remark}
\subsubsection{Example 2}
Let $\mathcal{G}$ be given by
$\mathcal{G}(x)=\frac{exp(-\frac{1}{x^p})}{\sqrt{x}}$  on $(0,r_0]$,
$p\in \mathbb{R}_+$. The following statements hold.
\begin{itemize}
\item[i)] The existence of a constant $C>0$ such that the solution $\phi$ of
\rfb{eq3} satisfies:
\begin{equation*}
||(w_{0},w_{1})||_{H_{0}^{1}(\Omega)\times L^{2}(\Omega)}^2\leq
16\int_{0}^{\frac{1}{\mathcal{G}\left(\frac{1}{2C\Lambda}\right)}}
\|B^* \phi^\prime(t)\|^2_U\, dt,
\end{equation*}
implies the existence of a constant $C_1>0$ such that for all
non-identically zero initial data $(w_{0},w_{1})\in \mathcal{D}(A) \times V$ and for all $t>0$,
the solution $w$ of \rfb{eq1}
satisfies:
\begin{equation*}
E(w(t))\leq \frac{C_1}{(\ln  t)^{\frac{1}{p}}}||(w_{0},w_{1})||_{\mathcal{D}(A) \times V}^2.
\end{equation*}
\item[ii)] The existence of a constant $C_1>0$ such that for all non-identically $(w_{0},w_{1})
\in \mathcal{D}(A) \times V $ and for all $t>0$, the solution $w$ of \rfb{eq1}
satisfies:
\begin{equation*}
E(w(t))\leq \frac{C_1}{(\ln  t)^{\frac{1}{p}}}||(w_{0},w_{1})||_{\mathcal{D}(A) \times V}^2.
\end{equation*}
implies the existence of a constant $C>0$ such that the solution $\phi$ of
\rfb{eq3} satisfies:
\begin{equation*}
||(w_{0},w_{1})||_{V \times X}^2 \leq
16 \, \int_{0}^{\frac{1}{\mathcal{F}\left(\frac{1}{2C\Lambda}\right)}}\|B^* \phi^\prime(t)|^2 \, dt.
\end{equation*}

\end{itemize}
\subsection{The nonlinear case}
Let $\Omega$ be a bounded connected open set of $\mathbb{R}^n$,
$n>1$ with a $\mathcal{C}^2$ boundary $\partial \Omega$. Let also $M
= \big( \alpha^{ij} \big)_{ 1\leq i,j \leq n } \in
\mathcal{C}^{\infty} \big( \bar{\Omega}; \mathbb{R}^{n\times
n}\big)$ be a symmetric and uniformly positive definite matrix.
Denote by $\nabla = \Big( \ds \Sigma_{j=1}^n
\beta^{1j}\partial_{x_j}, ...,\Sigma_{j=1}^n
\beta^{nj}\partial_{x_j}\Big)$ and $\Delta = \ds \Sigma_{i,j=1}^n
\partial_{x_i}\big(\alpha^{ij}\partial_{x_j}\big)$. We deal with
the following second order differential equation:

\be \label{nonli}
\left\{
\begin{array}{ll}
\partial_t^2 u -\Delta u + a(x)g(\partial_t u) = 0, \mbox{ in } \Omega \times (0,+\infty),\\
u = 0, \mbox{ on } \partial \Omega \times (0,+\infty),\\
(u,\partial_t u)(.,0)=(u_0,u_1), \mbox{ in } \Omega,
\end{array}
\right.\ee

where $a = a(x) \in L^{\infty}(\Omega)$ is a bounded function with
$a(x)\geq 0$ for all $x \in \Omega$ and $g : \mathbb{R}\rightarrow
\mathbb{R}$ is a continuous strictly increasing function with
$g(0)=0,$ $sg(s)\geq 0$. We assume the additional conditions:
\begin{itemize}
\item[(i)] $\exists r\in [1,\infty)$, $\exists c_1, c_2 > 0$, $|s|\leq 1 \Rightarrow$
$c_1 |s|^r \leq |g(s)| \leq c_2 |s|^{1/r}$.
\item[(ii)] $\exists k \in [0,1]$, $\exists p \in [1,\infty)$, $\exists c_3,$ $c_4 > 0$,
$|s| > 1$ $\Rightarrow$ $c_3 |s|^k\leq |g(s)| \leq c_4 |s|^p.$
\item[(iii)] $(n - 2)(1-k) \leq 4r$ and $(n - 2) (p - 1) \leq  1.$
\end{itemize}
For $(u_0,u_1)\in H^1_0(\Omega)\times L^2(\Omega)$, there exists a
unique solution $u \in
\mathcal{C}\Big([0,+\infty),H^1_0(\Omega)\Big) \cap
\mathcal{C}^1\Big([0,+\infty),L^2(\Omega)\Big)$. For more regular
initial data $(u_0, u_1) \in \left[H^2(\Omega)\cap
H^1_0(\Omega)\right] \times H^1_0(\Omega),$ the solution $u$ has the
following regularity $u \in L^\infty \Big(0,+\infty;H^2(\Omega)\cap
H^1_0(\Omega)\Big)
\cap W^{1,\infty}\Big( 0,+\infty;H^1_0(\Omega) \Big)\cap W^{2,\infty}\Big( 0,+\infty;L^2(\Omega) \Big).$\\
The energy of a solution is defined at instant $t \geq 0$ by
$$ E (u(t)) = \frac{1}{2} \int_{\Omega} \Big( |\partial_t u(x,t)|^2 + |\nabla u (x, t)|^2\Big)dx.$$
$E(u(t))$ is a non-increasing function of time and satisfies, for
all $t_2 > t_1 \geq 0$ the identity
$$ E (u(t_2)) - E (u(t_1)) =-\int_{t_1}^{t_2}\int_{\Omega} a(x)g (\partial_t u (x, t))
\partial_t u (x, t) dxdt \leq 0.$$
Denote by
$$X(u_0, u_1) = E (u(0))+E_1 (u(0))+[E_1(u(0))]^{(2p-1)}+[E_1(u(0))]^{(1+\frac{r-k}{r+1})},$$
where
$$
E_1(u(0)) = \|(\Delta u_0 - ag(u_1),u_1)\|^2_{L^2(\Omega) \times H^1_0(\Omega)}.
$$
We introduce $u_l$ the solution of the linear locally damped
problem: \be \left\{
\begin{array}{ll}
\partial^2_t u_l -\Delta u_l + a(x)\partial_t u_l = 0, \mbox{ in } \Omega \times (0,+\infty),\\
u_l = 0, \mbox{ on } \partial \Omega \times (0,+\infty),\\
(u_l,\partial_t u_l)(.,0)=(u_0,u_1) \in \left[H^2(\Omega)\times H^1_0(\Omega) \right] \cap H^1_0(\Omega),
\end{array}
\right.\ee
and make the following assumption:
\begin{itemize}
\item[\bf{(A)}] Assume that $x\mapsto x \, \mathcal{F}^{-1}(\frac{1}{x})$
is an increasing function and there exists $C>0$ such that for all
non-identically zero initial data $(u_{0},u_{1})\in
\left[H^2(\Omega)\times H^1_0(\Omega) \right] \cap H^1_0(\Omega) $,
the solution $\phi$ of \be \left\{
\begin{array}{ll}
\partial^2_t\phi(t)-\Delta \phi(t) = 0, \\
\phi(0) = u_0, \, \partial_t \phi (0) = u_1,
\end{array}
\right.\ee
satisfies:
\begin{equation}\label{obs2b}
||(u_{0},u_{1})||_{H^1_0(\Omega)\times L^2(\Omega)}^2 \leq
C \, \int_{0}^{\frac{1}{\mathcal{G}\left(\frac{1}{2C\Lambda_r}\right)}}
\int_{\Omega}a(x)|\partial_t\phi(x,t)|^2dxdt,
\end{equation}
where $$\Lambda_r=\frac{(r-1)+X(u_0,u_1)}{E(u(0))}.$$
\end{itemize}
The following result is deduced from Theorem \ref{prop1} and
\cite[Proposition 3]{phung2}.
\begin{proposition} Let {\bf{(A)}} holds. There exists $c> 0$ such that for
any $(u_0, u_1)\in \left[H^2(\Omega)\times H^1_0(\Omega) \right] \cap H^1_0(\Omega)$,
the solution $u$ of (\ref{nonli}) satisfies
\begin{eqnarray}
 E (u(s)) &&\leq ch((r - 1) + X (u_0, u_1))\\
&&+ c \int^{s+\frac{1}{G(h)}}_s \int_{\Omega} a(x)g(\partial_t u(x, t)) \partial_t u (x, t) dxdt ,
\end{eqnarray}
for any $h > 0$ and any $s \geq 0$ where
$$ G(h): = C {h}^{(2r+1)}\mathcal{F}(h)^{4(r+1)}.$$
\end{proposition}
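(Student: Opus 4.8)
The plan is to transplant the mechanism of Theorem~\ref{prop1}(2) to the nonlinear equation \rfb{nonli}, using the comparison estimate of \cite[Proposition 3]{phung2} to handle the nonlinearity. Fix a base time $s\ge 0$ and, as in the time-translation device at the end of the proof of Theorem~\ref{prop1}(2), restart the problem at $s$: let $\phi$ solve the conservative wave equation with Cauchy data $(u(s),\partial_t u(s))$ and set $v=\phi-u$ on $[s,s+T]$, where $T=\frac{1}{\mathcal{G}(1/(2C\Lambda_r))}$. Just as for \rfb{second}, $v$ vanishes at the initial instant and solves a wave equation driven by the damping term, so an energy estimate in the spirit of \rfb{aaa}--\rfb{36} controls the observation of $v$ by the dissipation. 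Consequently the conservative observation $\int_s^{s+T}\int_\Omega a(x)|\partial_t\phi|^2\,dx\,dt$ differs from a quantity built out of the genuine dissipation only by controllable terms.

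The heart of the argument is the nonlinear reduction, passing from the linear observation density $a|\partial_t u|^2$ to the physical dissipation density $a\,g(\partial_t u)\,\partial_t u$. Here the growth hypotheses (i)--(iii) on $g$ enter. Splitting $\Omega$ into the region $\{|\partial_t u|\le 1\}$ and its complement, the lower bound $c_1|\cdot|^r\le|g(\cdot)|$ together with $g(\sigma)\sigma\ge 0$ gives, on the first region, $|\partial_t u|^2\le c\,(g(\partial_t u)\,\partial_t u)^{\theta}$ for an exponent $\theta=\theta(r)$, while on the complement the bounds governed by $k$ and $p$, combined with the $W^{1,\infty}$--$W^{2,\infty}$ regularity of $u$ recalled after \rfb{nonli}, yield an analogous comparison. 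Integrating in time over $[s,s+T]$ and applying H\"older and Young inequalities produces exactly the correction exponents $2p-1$ and $1+\frac{r-k}{r+1}$ entering $X(u_0,u_1)$; this bookkeeping is the content of \cite[Proposition 3]{phung2}, which we quote.

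Combining this reduction with the assumed observability \rfb{obs2b} of \textbf{(A)} --- which bounds $\|(u(s),\partial_t u(s))\|^2_{H^1_0\times L^2}$, hence $E(u(s))$, from below by the conservative observation over the window --- and with the comparison for $v$, we reach an inequality of the form
\begin{equation*}
E(u(s))\le c\,\big((r-1)+X(u_0,u_1)\big)\cdot(\text{correction})+c\int_s^{s+T}\int_\Omega a(x)\,g(\partial_t u)\,\partial_t u\,dx\,dt .
\end{equation*}
The free parameter introduced by the Young splitting in the previous step is precisely $h$: absorbing the correction into $c\,h\,((r-1)+X(u_0,u_1))$ forces the observation window, through the relation $\mathcal{F}(x)=x\,(\mathcal{G}(x))^2$ and the definition of $\Lambda_r$, to have length $\frac{1}{G(h)}$ with $G(h)=C\,h^{2r+1}\mathcal{F}(h)^{4(r+1)}$, the powers $2r+1$ and $4(r+1)$ being exactly those accumulated by the H\"older exponents of the region decomposition. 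This yields the claimed inequality for all $h>0$ and $s\ge 0$.

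The main obstacle is the nonlinear reduction of the second paragraph: carrying the exponents through the decomposition of $\{|\partial_t u|\le 1\}$ and its complement so that the corrections assemble exactly into $(r-1)+X(u_0,u_1)$ and the window collapses to $1/G(h)$. This is delicate power-counting rather than a new idea, and it is exactly what \cite[Proposition 3]{phung2} provides; granting it, the conclusion follows by the same energy-comparison and time-translation steps already used for Theorem~\ref{prop1}(2).
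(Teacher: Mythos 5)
There is a genuine gap, and it concerns the very mechanism by which the statement is ``deduced from Theorem \ref{prop1} and \cite[Proposition 3]{phung2}.'' The paper's route passes through the \emph{linear locally damped} problem $u_l$, which is introduced in the text immediately before assumption \textbf{(A)} precisely for this purpose: under \textbf{(A)}, part (2) of Theorem \ref{prop1} yields the decay $E(u_l(t))\leq C\|(u_0,u_1)\|^2_{\mathcal{D}(A)\times V}\,\mathcal{F}^{-1}\left(\frac{1}{\sqrt{t}}\right)$ for the linear damped flow, and it is this \emph{decay rate}, restarted at time $s$ from the data $(u(s),\partial_t u(s))$, that is the input to \cite[Proposition 3]{phung2}. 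There the nonlinear solution is compared with $u_l$, the difference $z=u-u_l$ solving a \emph{linearly damped} equation $z''-\Delta z+a\,\partial_t z=a\left(\partial_t u-g(\partial_t u)\right)$, and the window length $\frac{1}{G(h)}$ arises by choosing $T$ so that the linear decay term $\mathcal{F}^{-1}\left(\frac{1}{\sqrt{T}}\right)$ is dominated by suitable powers of the free parameter $h$; that is how $\mathcal{F}$ enters $G(h)=C\,h^{2r+1}\mathcal{F}(h)^{4(r+1)}$. Your sketch never invokes the conclusion of Theorem \ref{prop1}(2) at all, and your claim that the window collapses to $\frac{1}{G(h)}$ ``through the relation $\mathcal{F}(x)=x(\mathcal{G}(x))^2$ and the definition of $\Lambda_r$'' cannot be right: the observability \rfb{obs2b} of \textbf{(A)} provides a \emph{single} window of length $\frac{1}{\mathcal{G}(1/(2C\Lambda_r))}$ fixed by the data, with no free parameter, whereas the Proposition asserts an estimate for \emph{every} $h>0$ and every $s\geq 0$. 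The $h$-family of windows can only come from the full decay estimate for $u_l$, which is exactly why the paper needs Theorem \ref{prop1} in the deduction and not merely \textbf{(A)}.

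Moreover, the first step of your plan fails as stated. With $v=\phi-u$ the difference solves $v''-\Delta v=a\,g(\partial_t u)$, with \emph{no} damping acting on $v$ --- unlike \rfb{second}, where the term $BB^*v'$ on the left-hand side is what made the absorption in \rfb{aaa}--\rfb{36} possible. So there is no analogue of \rfb{36} controlling the observation of $v$ by the dissipation: the source density $a\,g(\partial_t u)$ is related to the dissipation density $a\,g(\partial_t u)\,\partial_t u$ only through fractional powers on $\{|\partial_t u|\leq 1\}$ (via hypothesis (i)) and through the higher-order norms on the complement (via (ii)--(iii)), which is exactly why the corrections $[E_1(u(0))]^{2p-1}$ and $[E_1(u(0))]^{1+\frac{r-k}{r+1}}$ appear in $X(u_0,u_1)$ and why \cite[Proposition 3]{phung2} works with $u_l$ rather than with the conservative $\phi$. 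Your second paragraph correctly anticipates the region decomposition, but attributing to \cite[Proposition 3]{phung2} the ``bookkeeping'' for your conservative comparison misquotes that result: it is a statement whose proof consumes a decay rate for the linear damped equation. Restoring the chain \textbf{(A)} $\Rightarrow$ Theorem \ref{prop1}(2) applied to $u_l$ $\Rightarrow$ \cite[Proposition 3]{phung2} applied to $u$ repairs your argument and is precisely the paper's (one-line) proof.
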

We have the following stabilization result for the nonlinear damped wave equation.
\begin{theorem}
Let {\bf{(A)}} holds and suppose that there exists $c_0$ such that
the function $G$ satisfies $G^{-1}(x)\geq
\frac{c}{c+1}G^{-1}\big(x(c_0+1)\big)$ for all $x\geq 0$. Then the
energy of the solution of (\ref{nonli}) satisfies the estimate:
\begin{equation}
E(u(t)) \le  C G^{-1}\left(\frac{c'}{t}\right)\big((r-1)+X(u_0,u_1)\big), \mbox{
for } t \mbox{ sufficiently large,}
\end{equation}
and all non-identically zero initial data $(u^0,u^1) \in \left[H^2(\Omega)
\cap H^1_0(\Omega)\right] \times H^1_0 (\Omega)$, the constant $C$ depend on the initial data $(u^0,u^1) $.
\end{theorem}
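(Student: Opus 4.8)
The plan is to turn the integral inequality of the preceding Proposition into a self-contained differential inequality for the energy, optimize the free parameter $h$, and then iterate. First I would invoke the dissipation identity for the nonlinear system,
$$E(u(s)) - E\big(u(s+\tfrac{1}{G(h)})\big) = \int_s^{\,s+\frac{1}{G(h)}}\!\int_\Omega a(x)\,g(\partial_t u)\,\partial_t u\,dx\,dt,$$
to rewrite the Proposition's estimate. Writing $M:=(r-1)+X(u_0,u_1)$ for the data-dependent scale, it becomes the autonomous inequality
$$E(u(s)) \le c\,h\,M + c\big(E(u(s)) - E(u(s+\tfrac{1}{G(h)}))\big),\qquad \forall\,h>0,\ s\ge 0.$$

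Next I would optimize in $h$ so that the source term is slaved to the energy. Rearranging gives $E\big(u(s+\tfrac{1}{G(h)})\big)\le h\,M+\tfrac{c-1}{c}E(u(s))$, and the choice $h=\dfrac{E(u(s))}{c(c+1)M}$ collapses this, after a one-line computation, to the single-step contraction $E\big(u(s+\tfrac{1}{G(h)})\big)\le \tfrac{c}{c+1}E(u(s))$. Setting $\kappa=\tfrac{c}{c+1}\in(0,1)$ and building the sequence $t_0=0$, $t_{n+1}=t_n+1/G(h_n)$ with $h_n=\dfrac{E(u(t_n))}{c(c+1)M}$, one gets the discrete geometric decay $E(u(t_n))\le\kappa^{\,n}E(u(0))$.

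The heart of the argument is passing from this discrete statement to the continuous rate, and this is exactly where the structural hypothesis on $G$ is used. Let $\tau_n=1/G(h_n)$ be the increments. Inverting the hypothesis $G^{-1}(x)\ge\tfrac{c}{c+1}G^{-1}((c_0+1)x)$ shows it is equivalent to $G(\kappa x)\le\tfrac{1}{c_0+1}G(x)$; since $E$ is nonincreasing and $G$ increasing, this forces $\tau_j/\tau_{j+1}\le\tfrac{1}{c_0+1}<1$, i.e. the time steps \emph{grow geometrically}. Summing the geometric series yields $t_n\le\tfrac{c_0+1}{c_0}\,\tau_{n-1}$. Then, for large $t$, choosing $n$ with $t_n\le t<t_{n+1}$ and combining $E(u(t))\le E(u(t_n))$ with $t<t_{n+1}\le \tfrac{c_0+1}{c_0}\,\dfrac{1}{G\big(E(u(t_n))/(c(c+1)M)\big)}$, an application of $G^{-1}$ produces
$$E(u(t))\le c(c+1)\,M\,G^{-1}\Big(\frac{c_0+1}{c_0}\cdot\frac{1}{t}\Big),$$
which is precisely the claimed estimate with $C=c(c+1)$ and $c'=(c_0+1)/c_0$.

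The main obstacle is this last inversion. A naive bound on the sum of increments gives only $t_n\le n\,\tau_{n-1}$, which loses a factor $n$ and degrades the rate by a logarithm; the entire purpose of the hypothesis on $G$ is to upgrade this to $t_n\lesssim\tau_{n-1}$, and verifying that the hypothesis is exactly the condition making the increments grow geometrically is the one genuinely delicate point. The remaining care is routine: one restricts to \emph{$t$ sufficiently large} so that each $h_n$ stays in the range where $G$ (hence $\mathcal{F},\mathcal{G}$) and assumption \textbf{(A)} apply, and one disposes of the trivial case $E(u(\cdot))\equiv 0$ separately, the data dependence being carried entirely by $M=(r-1)+X(u_0,u_1)$ and by the threshold on $t$.
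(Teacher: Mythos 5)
Your proposal is correct, and its first half coincides with the paper's: both start from the Proposition plus the dissipation identity, choose the free parameter $h$ proportional to the current normalized energy (the paper works with $\mathcal{H}(s)=E(u(s))/((r-1)+X(u_0,u_1))$ and the step length $1/G(\mathcal{H}(s))$, you with $h_n=E(u(t_n))/(c(c+1)M)$), and arrive at the same one-step contraction $\mathcal{H}\bigl(s+\tfrac{1}{G(\mathcal{H}(s))}\bigr)\leq \tfrac{c}{c+1}\mathcal{H}(s)$; your explicit computation showing the choice of $h$ absorbs the source term $chM$ exactly is in fact more careful than the paper's passage to its inequality (\ref{Y}). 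Where you genuinely diverge is the discrete-to-continuous step. The paper stays in continuous time: it splits according to whether $c_0 s\leq c/G(\mathcal{H}(s))$ or not, iterates along the dilations $s\mapsto(1+c_0)s$, obtains by induction a maximum over the chain $\bigl(\tfrac{c}{c+1}\bigr)^{j}G^{-1}\bigl(\tfrac{c(c_0+1)^{j}}{c_0 s}\bigr)$, and collapses that chain using the hypothesis $G^{-1}(x)\geq\tfrac{c}{c+1}G^{-1}(x(c_0+1))$ directly, concluding $\mathcal{H}(s)\leq G^{-1}\bigl(\tfrac{c(1+c_0)}{c_0 s}\bigr)$. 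You instead build the explicit time sequence $t_{n+1}=t_n+1/G(h_n)$, get geometric decay $E(u(t_n))\leq\kappa^{n}E(u(0))$, recast the hypothesis in the equivalent form $G(\kappa x)\leq\tfrac{1}{c_0+1}G(x)$ (the substitution $x=G(y)/(c_0+1)$ verifying this is valid since $G(h)=Ch^{2r+1}\mathcal{F}(h)^{4(r+1)}$ is strictly increasing), deduce that the increments grow geometrically, and sum the series to get $t_n\leq\tfrac{c_0+1}{c_0}\tau_{n-1}$ before inverting $G$. The two mechanisms are logically equivalent uses of the same structural hypothesis, and they produce the same constant structure $G^{-1}\bigl(\tfrac{c'}{t}\bigr)$ with $c'=(c_0+1)/c_0$ up to constants; what your version buys is transparency (the hypothesis on $G$ is revealed as exactly the condition that the waiting times grow geometrically, so the total elapsed time is comparable to the last step) and a cleaner absorption of the data term, while the paper's version avoids locating $t$ inside an interval $[t_n,t_{n+1})$ and bounds $\mathcal{H}(s)$ pointwise at every continuous time directly. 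Your closing caveats (the trivial case $E\equiv 0$, where $h_n=0$ makes $\tau_n$ infinite but the estimate is vacuous, and the restriction to $t$ large so the arguments of $G$ stay in the admissible range) are the right ones and are handled no more explicitly in the paper than in your sketch.
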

\begin{proof}
Choosing
$$ h =  \frac{1}{2C \Lambda_r},$$
this implies the existence of a constant $c > 0$ such that
\begin{equation}\label{Y}
E (u(s)) \leq c \int^{s+\frac{1}{G( \frac{1}{2C \Lambda_r})}}_s \int_{\Omega}
a(x)g(\partial_t u(x, t)) \partial_t u (x, t) dxdt.
\end{equation}
Denoting by $\mathcal{H}(s) = \frac{E(u,s)}{(r-1)+X(u_0,u_1)}$, we deduce from (\ref{Y}) that
$$\mathcal{H}\left(s+ \frac{1}{G(\mathcal{H}(s))}\right) \le \mathcal{H}(s) \leq c\,
\left( \mathcal{H}(s) - \mathcal{H}\left(s+ \frac{1}{G(\mathcal{H}(s))}\right) \right),$$ which gives
$$\mathcal{H}\left(s+ \frac{1}{G(\mathcal{H}(s))}\right) \leq \frac{c}{c+1}\mathcal{H}(s).$$
\begin{itemize}
\item[$\bullet$] If $c_0s \leq c  \frac{1}{G(\mathcal{H}(s))}$, then $\mathcal{H}(s)
\leq G^{-1}\Big( \frac{c}{c_0s}\Big)$ and
\be\label{e1}\mathcal{H}((1+c_0)s)\leq \mathcal{H}(s)\leq G^{-1}\Big( \frac{c}{c_0s}\Big).\ee
\item[$\bullet$] If $c_0s > c  \frac{1}{G(\mathcal{H}(s))}$, then
\be\label{e2}\mathcal{H}((1 + c_0) s)\leq  \mathcal{H}\left(s+ \frac{1}{G(\mathcal{H}(s))}
\right)\leq \frac{c}{c+1}\mathcal{H}(s).\ee
\end{itemize}
By induction, we deduce from (\ref{e1}) and (\ref{e2}) that
$\forall  s > 0$ and $\forall n \in N^{*}$,
$$
\mathcal{H}((1 + c_0) s) \leq \max \left[ G^{-1}\Big( \frac{c}{c_0s}\Big),
\frac{c}{c+1}G^{-1}\Big( \frac{c(c_0+1)}{c_0s}\Big),...,
\right.
$$
$$
\left.
\left(\frac{c}{c+1}\right)^nG^{-1}\Big( \frac{c(c_0+1)^n}{c_0s}\Big),\left(\frac{c}{c+1} \right)^{n+1}\mathcal{H}\left(\frac{s}{(c_0+1)^{n+1}}\right)\right].
$$
Now, remark that with the above hypothesis on the function $G$,
$$\frac{c}{c+1}G^{-1}\Big( \frac{c(c_0+1)}{c_0s}\Big) \leq  G^{-1}\Big( \frac{c}{c_0s}\Big).$$
Consequently,
\begin{eqnarray}
\mathcal{H}((1 + c_0) s) &&\leq \max \left[ G^{-1}\Big( \frac{c}{c_0s}\Big),\left(\frac{c}{c+1}\right)^{n+1}\mathcal{H}\left(\frac{s}{(c_0+1)^{n+1}}\right)\right],  \nonumber \\
&&\leq \max \left[ G^{-1}\Big(
\frac{c}{c_0s}\Big),\left(\frac{c}{c+1}\right)^{n+1}\right], \hspace{16mm}\forall
n\geq 1,
\end{eqnarray}
and we conclude that
$$\mathcal{H}(s) \le  G^{-1}\left( \frac{c(1 + c_0)}{c_0s}\right), \forall s>0. $$
\end{proof}

\begin{rmks}\mbox{}
\begin{enumerate}
\item For $\mathcal{G}(x)= x^p$, we have $\mathcal{F}(x)=x^{2p+1}$ and
$G(x)=x^{(4p+3)(2r+1)-1}.$ \\ The energy of the solution of
(\ref{nonli}) satisfies the estimate:
\begin{equation}
E(u(t)) \le  \frac{c}{t^{\frac{1}{(4p+3)(2r+1)-1}}}\big((r-1)+X(u_0,u_1)\big),  \mbox{
for } t \mbox{ sufficiently large.}
\end{equation}
\item For the wave equation with arbitrary localized nonlinear damping, we obtain in \cite{nodea} a weak observability which implies in particular the estimate (\ref{obs2b}) and the logarithmic decay of the energy.\\
At the same time, this gives a geometry where the observability estimate (\ref{obs2b}) is satisfied and simplify the proof of the decay result in \cite{bella}.
\end{enumerate}
\end{rmks}


\begin{thebibliography}{99}
\bibitem{am-al} F. Alabau-Boussouira and K. Ammari, Sharp energy estimates for nonlinearly locally damped PDE's via observability for the associated undamped system, {\em J. Funct. Anal,} {\bf 260} (2011), 2424--2450.
\bibitem{ammari} K. Ammari and M. Tucsnak, Stabilization of second order evolution equations by a class of unbounded feedbacks, {\em ESAIM COCV.}, {\bf 6} (2001), 361--386.
\bibitem{kaisserge} K. Ammari and S. Nicaise, Stabilization of elastic systems by collocated feedback, {\em Lectures Notes in Mathematics}, 2124, Springer, Cham, 2015.
\bibitem{nodea} K. Ammari, A. Bchatnia and K. El Mufti, Stabilization of the nonlinear damped wave equation via linear weak observability, to appear in {\em Nonlinear Differential Equations and Applications.}

\bibitem{bella} M. Bellassoued, Decay of solutions of the wave equation with arbitrary localized nonlinear damping, {\em J. Differential Equations,} {\bf{ 211}} (2005), 303--332.
\bibitem{phung1} K.-D. Phung, {\em Polynomial decay rate for the dissipative wave equation}. {\em J. Differential Equations,} {\bf 1} (2007), 92--124.
\bibitem{phung2} K.-D. Phung, {\em Decay of solutions of the wave equation with localized nonlinear damping and trapped rays,} {\em Mathematical Control and Related Fields,} {\bf 1} (2011), 251--265.

\end{thebibliography}
\end{document}